\documentclass{amsart}
\usepackage{amsmath}
\usepackage{amssymb}
\usepackage{amsthm}
\usepackage{amsaddr}
\usepackage{mathrsfs}
\usepackage[all,cmtip]{xy}
\usepackage{times}
\usepackage{graphicx}
\usepackage{multicol}
\setlength\parindent{0truemm}
\setlength\headheight{14pt}
\setlength\parskip{1.5\parskip}

%ARROWS

\makeatletter
\newcommand{\eqnum}{\refstepcounter{equation}\textup{\tagform@{\theequation}}}
\makeatother

%ALPHABET

\def\dom{{\mathbf d}}

\def\J{{\mathcal J}}

\def\P{\vec{{\mathscr P}}}

\def\ran{{\mathbf r}}

\def\S{\mathcal S}

%GREEK VARIANTS

%OPERATORNAMES

%SYMBOLS AND DECORATIONS
\def\<{\langle}
\def\>{\rangle}
\def\leq{\leqslant}
\def\geq{\geqslant}

\newcommand{\twobar}{/\kern-0.3em/}

\newcommand{\upset}[1]{(#1)^{\uparrow}}

%SPACING

%\def\qed{$\square$}

%MATRICES

\numberwithin{equation}{section}

\newtheorem{theorem}{Theorem}[section]
\newtheorem{prop}[theorem]{Proposition}
\newtheorem{lemma}[theorem]{Lemma}
\newtheorem{cor}[theorem]{Corollary}

\theoremstyle{definition}
\newtheorem{remark}[theorem]{Remark}
\newtheorem{example}[theorem]{Example}

\newtheorem{definition}[theorem]{Definition}

\setlength\parskip{2truemm}

\newcommand{\ams}[2]{{\footnotesize\noindent AMS 2000 \textit{Mathematics subject
classification:} Primary #1\\[-2pt]\phantom{\footnotesize\noindent AMS 2000
\textit{Mathematics subject classification:}} Secondary #2\vspace{1pc}}}

\newcommand{\nobraupset}[1]{#1^{\uparrow}}

\numberwithin{equation}{section}

\title{Closed inverse subsemigroups of graph inverse semigroups}

\author{Amal AlAli \& N.D. Gilbert}

\address{
School of Mathematical and Computer Sciences\\
and the Maxwell Institute for the Mathematical Sciences,\\
Heriot-Watt University, Edinburgh EH14 4AS, U.K.\\
asa80@hw.ac.uk, N.D.Gilbert@hw.ac.uk}
\date{}
\pagestyle{plain}

%\swapnumbers

\begin{document}

\thispagestyle{empty}
\maketitle
\begin{abstract}
As part of his study of representations of the polycylic monoids, M.V. Lawson described all the closed inverse submonoids
of a polycyclic monoid $P_n$ and classified them up to conjugacy.  We show  that Lawson's description can be extended to closed inverse subsemigroups of graph inverse semigroups. We then apply B. Schein's theory of cosets in inverse semigroups to the closed inverse subsemigroups of graph inverse semigroups: we give 
necessary and sufficient conditions for a closed inverse subsemigroup of a graph inverse semigroup to have finite index, and determine 
the value of the index when it is finite.
\end{abstract}

\ams{20M18}{20M30, 05C25}

\section{Introduction}
Graph inverse semigroups were introduced by Ash and Hall \cite{AshHall}: the construction, recalled in detail in section 
\ref{grinvsgp}, associates to any directed graph $\Gamma$ an inverse semigroup $\S(\Gamma)$ whose elements are
pairs of directed paths in $\Gamma$ with the same initial vertex.  If $\Gamma$ has a single vertex and $n$ edges with $n>1$,
then $\S(\Gamma)$ is the polycyclic monoid $P_n$ as defined by Nivat and Perrot \cite{NivPer}: if $n=1$ then $\S(\Gamma)$ is the
bicyclic monoid $B$ with an adjoined zero.  Ash and Hall give necessary and sufficient conditions on the structure of $\Gamma$ for $\S(\Gamma)$ to be congruence-free, and they use graph inverse semigroups to study the realisation of finite posets as the posets of $\J$--classes in finite semigroups.  The structure of  graph inverse semigroups as HNN extensions of inverse semigroups with zero was presented in \cite[section 5]{DomGil}. For more recent work on the structure of graph inverse semigroups, we refer to
\cite{JonLaw, MesMitch, MMMP}.  Connections between graph inverse semigroups and graph $C^*$--algebras have been fruitfully 
studied in \cite{Pat}.

As part of his study of representations of the polycylic monoids, Lawson \cite{LwPoly} described all the closed inverse submonoids
of a polycyclic monoid $P_n$ and classified them up to conjugacy.  We show in section \ref{cliss} that Lawson's description can be extended to closed inverse subsemigroups of graph inverse semigroups.  As in Lawson's study, there are three types: finite chains of idempotents, infinite chains of idempotents, and closed inverse subsemigroups of {\em cycle type} that are generated (as closed
inverse subsemigroups) by a single non-idempotent element.  In section \ref{clissindex} we apply Schein's theory of cosets in inverse subgroups \cite{Sch} to the closed inverse subsemigroups of graph inverse semigroups as classified in section \ref{cliss}: we give 
necessary and sufficient conditions for a closed inverse subsemigroup $L$ of $\S(\Gamma)$ to have finite index, and determine 
the value of the index when it is finite.

\section{Preliminaries}

\subsection{Cosets}
\label{cosets}
Let $S$ be an inverse semigroup with semilattice of idempotents $E(S)$.  We recall that the \emph{natural partial order} on $S$ is defined by
\[ s \leq t \Longleftrightarrow \text{there exists} \; e \in E(S) \; \text{such that} \; s=et \,.\]
A subset $A \subseteq S$ is \emph{closed} if, whenever $a \in A$ and $a \leq s$, then $s \in A$.  The closure $\nobraupset{B}$ of a subset
$B \subseteq S$ is defined as
\[ \nobraupset{B} = \{ s \in S : s \geq b \; \text{for some} \; b \in B \}\,.\]
A subset $L$ of $S$ is \emph{full} if $E(S) \subseteq L$.

Let $L$ be a closed inverse subsemigroup of $S$, and let $t \in S$ with $tt^{-1} \in L$.  Then the subset
\[ \upset{Lt} = \{ s \in S : \text{there exists $x \in L$ with} \; s \geq xt \} \]
is a (right) coset of $L$ in $S$.  For the basic theory of such cosets we refer to \cite{Sch}: the essential facts that we require are
contained in the following result.

\begin{prop}{\cite[Proposition 6.]{Sch}}
\label{cosetsofL}
Let $L$ be a closed inverse subsemigroup of $S$.
\begin{enumerate}
\item Suppose that $C$ is a coset of $L$.  Then $\upset{CC^{-1}}=L$.
\item If $t \in C$ then $tt^{-1} \in L$ and $C = \upset{Lt}$.  Hence two cosets of $L$ are either disjoint or they coincide.
\item Two elements $a,b \in S$ belong to the same coset $C$ of $L$ if and only if $ab^{-1} \in L$.
\end{enumerate}
\end{prop}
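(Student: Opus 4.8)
The plan is to prove parts (a), (b), (c) in that order, using only two standard properties of the natural partial order on an inverse semigroup --- that it is compatible with multiplication and with inversion (so $s_1\leq s_2$ and $t_1\leq t_2$ imply $s_1t_1\leq s_2t_2$ and $s_1^{-1}\leq s_2^{-1}$), and that $xe\leq x$ for every $e\in E(S)$ --- together with the fact that $L$ is a closed inverse \emph{subsemigroup}. Recall that every coset of $L$ has the form $\upset{Lt_0}$ with $t_0t_0^{-1}\in L$, and that such a set always contains $t_0$ (take $x=t_0t_0^{-1}$) and contains $\ell t_0$ for every $\ell\in L$.

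For (a), write $C=\upset{Lt_0}$. If $c_1,c_2\in C$, choose $x_i\in L$ with $c_i\geq x_it_0$; compatibility gives $c_1c_2^{-1}\geq x_1(t_0t_0^{-1})x_2^{-1}$, and the right-hand side lies in $L$, so closure of $L$ forces $c_1c_2^{-1}\in L$; hence $\upset{CC^{-1}}\subseteq L$. Conversely, for $\ell\in L$ both $\ell t_0$ and $t_0$ lie in $C$, so $(\ell t_0)t_0^{-1}=\ell(t_0t_0^{-1})\in CC^{-1}$ while $\ell\geq\ell(t_0t_0^{-1})$, giving $L\subseteq\upset{CC^{-1}}$.

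For (b), let $t\in C=\upset{Ls}$ and fix $x\in L$ with $t\geq xs$. Since $C$ is a coset, $ss^{-1}\in L$, so $tt^{-1}\geq x(ss^{-1})x^{-1}\in L$ and closure gives $tt^{-1}\in L$. The inclusion $\upset{Lt}\subseteq\upset{Ls}$ is straightforward, since $y\geq zt$ with $z\in L$ forces $y\geq(zx)s$ with $zx\in L$. For $\upset{Ls}\subseteq\upset{Lt}$ the direct idea of pushing $s$ itself above a translate of $t$ fails, and one argues instead: for $u\geq ws$ with $w\in L$, the relation $t^{-1}\geq s^{-1}x^{-1}$ gives $ut^{-1}\geq w(ss^{-1})x^{-1}\in L$, so $ut^{-1}\in L$ by closure, and then $u\geq(ut^{-1})t$ exhibits $u\in\upset{Lt}$. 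This reverse inclusion --- in particular the observation that $ut^{-1}\in L$ for all $u\in C$, which is where $ss^{-1}\in L$ is genuinely used --- is the one place where care is needed; everything else is bookkeeping. The ``disjoint or coincide'' assertion then follows at once, since a common element $t$ of two cosets forces each of them to equal $\upset{Lt}$.

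For (c): if $a,b$ lie in a common coset $C$, then by (b) $aa^{-1}\in L$ and $C=\upset{La}$, so $b\geq xa$ for some $x\in L$, whence $ab^{-1}\geq(aa^{-1})x^{-1}\in L$ and $ab^{-1}\in L$. Conversely, if $ab^{-1}\in L$ then $(ab^{-1})(ab^{-1})^{-1}\in L$ lies below $aa^{-1}$, so $aa^{-1}\in L$, and symmetrically $bb^{-1}\in L$; thus $\upset{La}$ and $\upset{Lb}$ are both cosets. Since $a\geq(ab^{-1})b$ we have $a\in\upset{Lb}$, while $a\in\upset{La}$ trivially, so by the disjoint-or-coincide property $\upset{La}=\upset{Lb}$, a single coset containing both $a$ and $b$. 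Thus the main obstacle is the second inclusion in (b); once that is in hand, (c) and the coincide/disjoint dichotomy are essentially formal consequences.
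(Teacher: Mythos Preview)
Your proof is correct in all three parts; the key step --- showing $\upset{Ls}\subseteq\upset{Lt}$ in part (b) by first deducing $ut^{-1}\in L$ and then using $u\geq u(t^{-1}t)=(ut^{-1})t$ --- is carried out cleanly, and parts (a) and (c) follow as you indicate. Note, however, that the paper does not supply its own proof of this proposition: it is quoted from Schein \cite{Sch} without argument, so there is no in-paper proof to compare against.
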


We note that the cosets of $L$ partition $S$ if and only if $L$ is full in $S$.
The cardinality of the set of cosets of $L$ in $S$ is the {\em index} of $L$ in $S$, denoted by $[S:L]$.  

The closed inverse submonoids of free inverse monoids were completely described by Margolis and Meakin in \cite{MarMea}.
For other related work on inverse subsemigroups of finite index, see \cite{AlAliGil} and the first author's PhD thesis \cite{amal_thesis}.

\subsection{Graph inverse semigroups}
\label{grinvsgp}
Let $\Gamma$ be a finite directed graph with vertex set $V(\Gamma)$ and edge set $E(\Gamma)$. Let $\P(\Gamma)$ be the
path category of $\Gamma$, with source and target maps $\dom$ and $\ran$.  We note that $\P(\Gamma)$ admits \emph{empty} (or \emph{length zero}) paths that consists of a single vertex.
The {\em graph inverse semigroup} $\S(\Gamma)$ of $\Gamma$ has underlying  set
$$\{ (v,w) : v,w \in \P(\Gamma) \,, \dom(v)=\dom(w) \} \cup \{ 0 \}$$
equipped with the binary operation
\[
(t,u)(v,w) = 
\begin{cases}
(t,pw) & \text{if $u=pv$ in $\P(\Gamma)$}, \\
(pt,w) & \text{if $v=pu$ in $\P(\Gamma)$}, \\
0 & \text{otherwise.}
\end{cases} \]
This composition is illustrated in the following diagrams:

\begin{center}
\includegraphics[height=5cm]{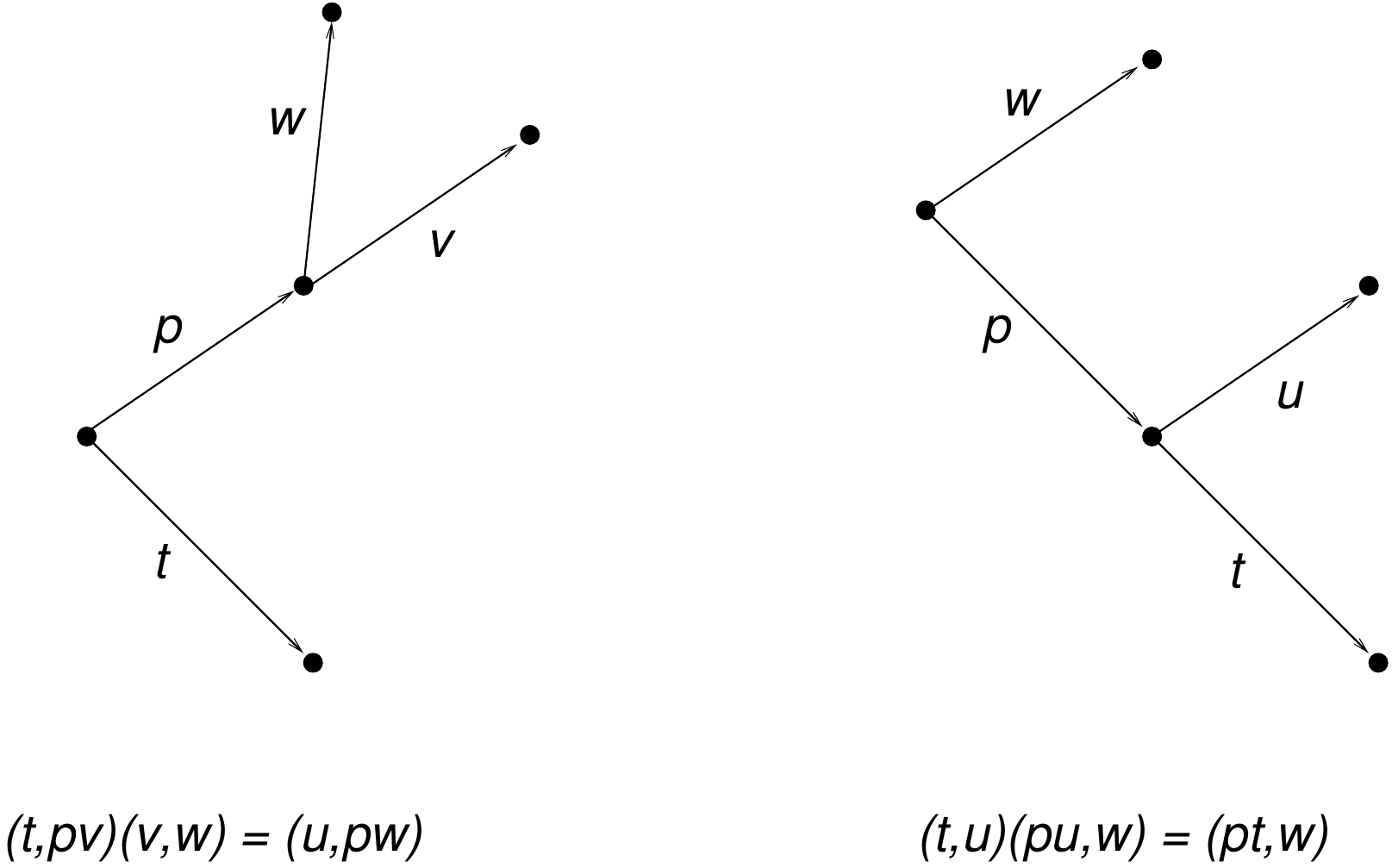}
\end{center}

The inverse of $(v,w)$ is given by $(v,w)^{-1} = (w,v)$.
The idempotents of $\S(G)$ are the pairs $(u,u)$ and $0$: if we
identify $E(\S(G))$ with $\P(G) \cup \{ 0 \}$, then
$\P(G) \cup \{ 0 \}$ becomes a semilattice with ordering given by
\begin{equation}
\label{grinvord}
u \leq v \quad \text{if and only if} \; v \; \text{is a suffix of} \;  u
\end{equation}
and composition (meet)
\begin{equation}
\label{grinvmeet}
u \wedge v = \begin{cases}
u & \text{if $v$ is a suffix of $u$},\\
v & \text{if $u$ is a suffix of $v$},\\
0 & \text{otherwise} \end{cases} \end{equation}
Hence $u \wedge b$ is non-zero if and only if one of $u,v$ is a suffix of the other: in this case we say that $u,v$ are {\em suffix comparable}.

The natural partial order on non-zero elements of $\S(G)$ is then given by $(t,u) \leq (v,w)$ if and only if there exists a path $p \in \Gamma$ such
that $t=pv$ and $u=pw$: that is, we descend in the natural partial order from $(v,w)$ by prepending the same prefix to each of $v$ and $w$., and ascend from $(v,w)$ by deleting an identical prefix from each of $v$ and $w$.  Recall that an inverse semigroup $S$ with zero $0 \in S$ is
said to be $E^*$--{\em unitary}, if whenever $e \in E(S), e \ne 0$ and $s \in S$ with $s \geq e$ then $s \in E(S)$.  It is clear that
graph inverse semigroups are $E^*$--unitary.  For further structural results about graph inverse semigroups, we refer to \cite{JonLaw,MesMitch}.
Graph inverse semigroups as topological inverse semigroups have been recently studied in \cite{MMMP}.

\section{Closed inverse subsemigroups of graph inverse semigroups}
\label{cliss}
Our first result generalizes -- and closely follows --  Lawson's classification \cite[Theorem 4.3]{LwPoly} of closed inverse submonoids of the polycyclic monoids $P_n$ to the closed inverse subsemigroups of graph inverse semigroups $\S(\Gamma)$.   Given Lawson's
insights, the generalization is largely routine, but it is perhaps slightly surprising that the classification extends from bouquets
of circles (giving the polycyclic monoids as graph inverse semigroups) to arbitrary finite directed graphs, and so we have presented it
in detail.  Our notational conventions also differ slightly from those in \cite{LwPoly}.

\begin{theorem}
\label{clisms_of_grinv}
In a graph inverse semigroup $\S(\Gamma)$ there are three types of proper closed inverse subsemigroups $L$:
\begin{enumerate}
\item Finite chain type: $L$ consists of a finite chain of idempotents.
\item Infinite chain type: $L$ consists of an infinite chain of idempotents.
\item Cycle type: $L$  has the form
\[
L = L_{p,d} = \{ (vp^{r}d,vp^{s}d) : r,s \geq 0 \; \text{with} \; v \; \text{a suffix of}\; p \}
\cup \{(q,q) : q \; \text{a suffix of }d \}, \]
where $p$ is a directed circuit in $\Gamma$, $d$ is a directed path in $\Gamma$ starting at the initial point of $p$, and where $p,d$ do
not share a non-trivial prefix.  In this case, $L$ is the smallest closed inverse subsemigroup of $\S(\Gamma)$ containing $(d,pd)$.
\end{enumerate} 
\end{theorem}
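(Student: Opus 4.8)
The plan is to treat the three cases in turn; the first two are immediate and the third carries all the weight. If $0\in L$ then, since $0$ lies below every element of $\S(\Gamma)$ and $L$ is closed, $L=\S(\Gamma)$; so a proper $L$ omits $0$. Hence the meet of any two idempotents of $L$ is non-zero, so $E(L)$ is a chain in the suffix order of \eqref{grinvord}, and it is upward closed because $L$ is. Thus if $L\subseteq E(\S(\Gamma))$ then $L$ is an upward-closed chain of paths with a common terminal vertex — a finite chain of idempotents if it has a longest element, an infinite chain of idempotents otherwise — and conversely every such chain is closed under meets, hence a closed inverse subsemigroup. This gives cases (a) and (b).

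Suppose then that $L$ contains a non-idempotent $(v,w)$. As $L$ is a subsemigroup avoiding $0$, every power $(v,w)^{n}$ is non-zero, and inspecting the multiplication forces $v,w$ to be suffix comparable; replacing $(v,w)$ by its inverse if necessary we may take $w$ a proper suffix of $v$, and equality of initial vertices then gives $v=cw$ with $c$ a directed circuit at $\dom(v)$. Now normalise: whenever $c$ and $w$ begin with the same edge $a$, write $c=ac'$, $w=aw'$ and pass to the strictly larger element $\bigl((c'a)w',w'\bigr)\in L$, noting that $c'a$ — a cyclic rotation of $c$ — is again a circuit. Each step shortens the second coordinate, so after finitely many we reach $(pd,d)\in L$ with $p$ a directed circuit, $d$ a directed path starting at $\dom(p)$, and $p,d$ having no common initial edge (equivalently, no non-trivial common prefix).

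A direct computation shows that for any such pair $(p,d)$ the inverse subsemigroup generated by $(pd,d)$ is $\{(p^{n}d,p^{m}d):n,m\geq 0\}$, and — using that $p,d$ have distinct first edges, so common prefixes of $p^{n}d$ and $p^{m}d$ occur only inside the shared $p^{\min(n,m)}$-block — its upward closure is exactly the set $L_{p,d}$ displayed in the statement. So $L_{p,d}$ is a closed inverse subsemigroup, it is the smallest one containing $(pd,d)=(d,pd)^{-1}$ (this is the last clause of (c)), and from the normalised element of the previous paragraph, $L_{p,d}\subseteq L$ for that $(p,d)$.

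The remaining, and only delicate, point is $L\subseteq L_{p,d}$ for a suitable pair. The key is the structure of $E(L)$: it is an \emph{infinite} upward-closed chain, since $(c^{n}w,c^{n}w)\in E(L)$ for all $n$, hence it is precisely the set of finite suffixes of a left-infinite path $\omega$, and $\omega$ is eventually periodic; writing it with its primitive period $p_{0}$ and shortest preperiod $d_{0}$ gives $\omega={}^{\infty}p_{0}\,d_{0}$ with $p_{0}$ a circuit, $d_{0}$ a path from $\dom(p_{0})$, and $p_{0},d_{0}$ with no common first edge. Let $\mu$ be the least positive integer with $(p_{0}^{\mu}d_{0},d_{0})\in L$; such integers exist (by rewriting the normalised element $(pd,d)$ in terms of $p_{0},d_{0}$ and ascending), and by an elementary computation with these elements and their inverses they form exactly the positive multiples of $\mu$. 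One then checks $E(L)=E(L_{p_{0}^{\mu},d_{0}})$. Now let $(a,b)\in L$ be non-idempotent, with (after possibly inverting) $b$ a suffix of $a$, say $a=cb$ with $c$ a circuit; since all $c^{n}b$ are suffixes of $\omega$ and $p_{0}$ is primitive, $c$ must be a power of the rotation of $p_{0}$ based at $\dom(b)$. Multiplying $(a,b)$ by $(p_{0}^{\mu}d_{0},d_{0})$, and using once more that $p_{0},d_{0}$ have distinct first edges, rules out $b$ (or $a$) being a proper suffix of $d_{0}$; so $b=vp_{0}^{j}d_{0}$ for some suffix $v$ of $p_{0}$, and a short rewriting gives $(a,b)=\bigl(vp_{0}^{k+j}d_{0},vp_{0}^{j}d_{0}\bigr)$. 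Finally, ascending $(a,b)$ by deleting its common prefix $vp_{0}^{j}$ produces $(p_{0}^{k}d_{0},d_{0})\in L$, so $\mu\mid k$ and $(a,b)\in L_{p_{0}^{\mu},d_{0}}$. Hence $L=L_{p_{0}^{\mu},d_{0}}$ is of cycle type. The single real obstacle is this last step — determining which non-idempotent elements $L$ can contain — and its two essential inputs are the absence of $0$ from $L$ and the no-common-prefix normalisation of $(p_{0},d_{0})$.
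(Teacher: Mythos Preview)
Your proof is correct and reaches the same classification, but the route in the cycle-type case differs from the paper's in a way worth noting.  The paper verifies by explicit product computations that each $L_{p,d}$ is a closed inverse subsemigroup, and then, given a proper $L$ containing a non-idempotent, chooses $(d,pd)\in L$ with $|d|$ minimal and then $|p|$ minimal; it argues that any $(w_1,w_2)\in L$ has each $w_i$ a suffix of some $p^{m_i}d$, and essentially stops there.  You instead realise $L_{p,d}$ as the upward closure of the inverse subsemigroup $\{(p^{n}d,p^{m}d):n,m\geq 0\}$ (which in one stroke shows it is a closed inverse subsemigroup and the smallest one containing $(d,pd)$), and for the converse you read off the pair $(p_0,d_0)$ from the eventually-periodic left-infinite path $\omega$ encoding $E(L)$, then isolate $\mu$ as the least $k$ with $(p_0^{k}d_0,d_0)\in L$.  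Your final paragraph genuinely pins down every non-idempotent as $(vp_0^{j+k}d_0,\,vp_0^{j}d_0)$ with $\mu\mid k$, a step the paper's argument leaves implicit.  The two approaches meet at the end --- your $(p_0^{\mu},d_0)$ is exactly the paper's minimising $(p,d)$ --- but yours makes the role of periodicity explicit, at the price of introducing $\omega$.

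One small point: your sentence about multiplying $(a,b)$ by $(p_0^{\mu}d_0,d_0)$ to rule out $b$ being a proper suffix of $d_0$ is not the most direct justification.  The conclusion already follows from your own setup: since $(c^{n}b,c^{n}b)\in E(L)$ for all $n$, we have $\omega={}^{\infty}c\cdot b$, so $b$ is itself a preperiod of $\omega$, whence $|b|\geq|d_0|$ by the minimality of $d_0$.
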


\begin{proof}
It is easy to see that closed chains of idempotents are indeed closed inverse subsemigroups.  For the cycle type, if
$L = L_{p,d}$ then any two elements are suffix comparable, and we have
\begin{align*}
(q,q)(q',q') &= (q \wedge q', q \wedge q') \; \text{for suffixes $q,q'$ of $d$}, \\
(q,q) (vp^{r}d,vp^{s}d) &=  (vp^{r}d,vp^{s}d) =  (vp^{r}d,vp^{s}d)(q,q).
\end{align*}
Now consider $ (vp^{r}d,vp^{s}d) (wp^{j}d,wp^{k}d)$: write $p=v_0v$ and suppose that $s<j$.  Then
$wp^jd = wp^{j-s-1}v_0vp^sd$ and so
\[ (vp^{r}d,vp^{s}d) (wp^{j}d,wp^{k}d) = (wp^{j-s-1}v_0vp^{r}d,wp^{k}d) = (wp^{j-s+r}d,wp^kd) \in L \,, \]
and a similar calculation applies if $s>j$.  If $s=j$ and $v$ is a suffix of $w$, say $w=v_1v$, then
\begin{align*}
 (vp^{r}d,vp^{s}d) (wp^{s}d,wp^{k}d) &= (vp^{r}d,vp^{s}d) (v_1vp^{s}d,wp^{k}d) \\
&= (v_1vp^{r}d,wp^{k}d) \\ &= (wp^{r}d,wp^{k}d) \in L \end{align*}
and a similar calculation applies if $s=j$ and $w$ is a suffix of $v$.  Hence $L$ is a subsemigroup of $\S(\Gamma)$,
and since the inverse of an element of $L$ is clearly also in $L$, we deduce that $L$ is an inverse subsemigroup of
$\S(\Gamma)$.  Since we ascend in the natural partial order in $L$ by deleting identical prefixes from the paths $vp^{r}d$ and $vp^{s}d$,  or from a given suffix of $d$, it is also clear that $L$ is closed.

If $F$ is a closed inverse subsemigroup of $\S(\Gamma)$ and contains $(d,pd)$, then for any $m,n \geq 0$ we have
$(pd,d)^m(d,pd)^n = (p^md,d)(d,p^nd) = (p^md,p^nd) \in F$.  Ascending in the natural partial order, we may obtain any 
element of $L_{p,d}$, and so $L_{p,d} \subseteq F$.

Let $L$ be a closed inverse subsemigroup of $\S(\Gamma)$.  If $w$ and $w'$ are paths occurring in elements of $L$ and are not
suffix comparable, then the product of the idempotents $(w,w)$ and $(w',w')$ in $L$ is equal to $0$, and so $0\in L$ and by closure $L=\S(\Gamma)$.  Hence if $L$ is proper, any two paths occurring in elements of $L$ are suffix comparable and hence have the same terminal vertex.  By definition, if $(u,v) \in \S(\Gamma)$ then $u,v$ have the same initial vertex:  hence if $(u,v) \in L$ then $u,v$ have the same initial and the same terminal vertex in $\Gamma$.  Suffix comparability then ensures that any proper closed inverse subsemigroup of $\S(\Gamma)$ consisting entirely of idempotents is either a finite or an infinite chain.  We note that in the second case, in order to obtain directed paths of arbitrary length, $\Gamma$ must contain a directed circuit.

We shall now describe those closed inverse subsemigroups of $\S(\Gamma)$ which contain non-idempotent elements. Suppose that 
$L \neq E(L)$ is a closed inverse subsemigroup of $\S(\Gamma)$.
Then there exists $(u,v) \in L,$ with $u \neq v$, and we may assume that the path  $u$ is shorter than the path $v$.  Hence $u$ is a suffix of $v$ and so $v=pu$ for some path $p$.  Since $u$ and $v$ have the same initial and terminal vertices, $p$ must be a directed
circuit in $\Gamma$.  If $p$ and $u$ share a common prefix, with $p=ap_1$ and $u=au_1$ then 
\[ (u_1,p_1u) \geq (au_1,ap_1u) = (u,pu) \]
and so by closure, $(u_1,p_1u) \in L$.

Amongst the non-idempotent elements $(u,pu) \in L$, choose $u=d$ to have smallest possible length, and then having chosen $d$,
choose $p$ to be a non-empty directed circuit of smallest possible length.  Then $d,p$ do not share a non-trivial prefix.   Now for 
any $m \geq 0$ we have $(p^md,p^md) \in E(L)$ and so, if $(w_1,w_2) \in L$, each $w_i$ is a suffix of some directed path
$p^{m_i}d$.  Since $L$ is closed, every suffix of $d$ is in $L$ and by minimality of $|d|$, every element of $L$ that contains a suffix of $d$ is an idempotent $(q,q)$.  Hence if $|w_i| < |d|$ we have $w_1=w_2$.  So we may now assume that for $i=1,2$ we have
$|w_i| \geq |d|$, and so $w_1 = up^rd$, $w_2=vp^sd$ for some $r,s \geq 0$ and suffixes $u,v$ of $p$.
\end{proof}

From the result of the previous Theorem, we may immediately conclude the following: 

\begin{cor}
If the graph $\Gamma$ contains no directed circuit, then every proper closed inverse subsemigroup of $\S(\Gamma)$ is a chain
of idempotents.
\end{cor}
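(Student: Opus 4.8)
The plan is to read the corollary off Theorem~\ref{clisms_of_grinv} directly, by eliminating two of its three cases under the acyclicity hypothesis. First I would recall that, by that theorem, any proper closed inverse subsemigroup $L$ of $\S(\Gamma)$ is of one of exactly three types: a finite chain of idempotents, an infinite chain of idempotents, or of cycle type $L=L_{p,d}$. The cycle-type description has a directed circuit $p$ of $\Gamma$ built into it — indeed Theorem~\ref{clisms_of_grinv} records that $L_{p,d}$ is the smallest closed inverse subsemigroup of $\S(\Gamma)$ containing the non-idempotent element $(d,pd)$, with $p$ a directed circuit. So if $\Gamma$ contains no directed circuit there is no such $p$ available, and no proper closed inverse subsemigroup of cycle type can occur.

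Next I would dispose of the infinite-chain case. An infinite descending chain of idempotents is a sequence $(u_1,u_1) > (u_2,u_2) > \cdots$ in $E(\S(\Gamma))$, and by the ordering \eqref{grinvord} each $u_{i+1}$ is a proper suffix of $u_i$; hence the lengths $|u_i|$ strictly increase, so $\Gamma$ must admit directed paths of arbitrarily large length. (This is precisely the remark already made inside the proof of Theorem~\ref{clisms_of_grinv}, that an infinite chain forces $\Gamma$ to contain a directed circuit.) But in a finite directed graph with no directed circuit no vertex can be repeated along a directed path, so every directed path has length at most $|V(\Gamma)|-1$; consequently there is no infinite chain of idempotents in $\S(\Gamma)$. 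The only remaining possibility from the trichotomy is the finite-chain type, which is exactly the conclusion of the corollary.

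There is essentially no obstacle here, since the statement is a genuine corollary: the one point that needs a word of justification is the bound on the lengths of directed paths in a finite acyclic graph, and that follows immediately from the pigeonhole observation that a repeated vertex along a directed path would close up a directed circuit. Thus the proof consists of the two case-eliminations above together with this elementary length bound.
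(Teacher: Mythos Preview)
Your approach is exactly the paper's: the corollary is stated there as an immediate consequence of Theorem~\ref{clisms_of_grinv}, and you have simply spelled out the two case-eliminations that make it immediate. One small slip to fix: by the ordering \eqref{grinvord}, $u \leq v$ means that $v$ is a suffix of $u$, so in a descending chain $(u_1,u_1) > (u_2,u_2) > \cdots$ it is $u_i$ that is a proper suffix of $u_{i+1}$, not the reverse as you wrote; your conclusion that the lengths $|u_i|$ strictly increase is nonetheless correct, and the rest of the argument goes through unchanged.
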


Our next result, based on \cite[Theorem 4.4]{LwPoly} which treats the polycyclic monoids, classifies the closed inverse subsemigroups of a graph inverse semigroup up to conjugacy.  We begin with the following definitions

\begin{definition}
Let $L =\upset{u,u}$ be a closed inverse subsemigroup of finite chain type in a graph inverse semigroup $\S(\Gamma)$.
We call the initial vertex of the directed path $u$ the \textit{root} of $L$.  
\end{definition}

Adapting ideas of \cite[Section 1.3]{Loth} from words to paths in $\Gamma$:

\begin{definition}
Two paths $p,q$ in $\Gamma$ are \textit{conjugate} if there are paths $u,v$ in $\Gamma$ such that $p=uv$ and $q=vu$. 
Equivalently, (see \cite[Proposition 1.3.4]{Loth}) there exists a path $w$ in $\Gamma$ such that $wp=qw$.  Conjugate paths 
must be directed circuits in $\Gamma$, and conjugacy amounts to the selection of an alternative initial edge.
\end{definition}

The following Lemma is due to Lawson and is extracted from the proof of \cite[Theorem 4.4]{LwPoly}.

\begin{lemma}
\label{cong_in_E}
Let $S$ be an $E^*$--unitary inverse semigroup.
If $H$ and $K$ are conjugate closed inverse subsemigroups of $S$ with $H \ne S \ne K$ and $H \subseteq E(S)$ then $K \subseteq E(S)$.
Moreover, if $H$ has a minimum idempotent, then so does $K$.
\end{lemma}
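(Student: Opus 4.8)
The plan is to unwind the definition of conjugacy into a single conjugating element and then transport idempotents along it. Conjugacy of the closed inverse subsemigroups $H$ and $K$ provides an element $t \in S$ with $tt^{-1} \in H$, $t^{-1}t \in K$, $t^{-1}Ht \subseteq K$ and $tKt^{-1} \subseteq H$. I first record that $0 \notin H$: in $S$ the zero lies below every element in the natural partial order, so if $0 \in H$ then closure of $H$ would give $H = S$, contrary to hypothesis; for the same reason $0 \notin K$. The only facts about inverse semigroups I will lean on are that idempotents commute, that $exe \leq x$ for any idempotent $e$ and any $x$, that multiplication — hence the conjugation map $x \mapsto t^{-1}xt$ — is monotone for the natural partial order, and that for a fixed $t$ the assignment $e \mapsto t^{-1}et$ is a bijection from the idempotents below $tt^{-1}$ onto the idempotents below $t^{-1}t$, with inverse $f \mapsto tft^{-1}$; in particular an idempotent $g \leq tt^{-1}$ satisfies $g = t(t^{-1}gt)t^{-1}$, so $g \neq 0$ implies $t^{-1}gt \neq 0$.

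For the first assertion, take $k \in K$ and set $g := tkt^{-1}$. Then $g \in tKt^{-1} \subseteq H \subseteq E(S)$, so $g$ is an idempotent, and $g \neq 0$ since $0 \notin H$. A short computation shows $g \leq tt^{-1}$, so by the transport fact above $e := t^{-1}gt$ is a non-zero idempotent; rewriting, $e = (t^{-1}t)\,k\,(t^{-1}t) \leq k$. Thus $k$ lies above the non-zero idempotent $e$, and $E^*$-unitarity of $S$ forces $k \in E(S)$. Since $k$ was an arbitrary element of $K$, this gives $K \subseteq E(S)$.

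For the ``moreover'' clause, suppose $H$ has minimum idempotent $m$. Because $tt^{-1} \in H$ we have $m \leq tt^{-1}$, so $m_K := t^{-1}mt$ is a non-zero idempotent, and $m_K \in t^{-1}Ht \subseteq K$. I claim $m_K$ is the minimum element of $K$. Let $k \in K$; by the first part $k$ is a (non-zero) idempotent, so $g := tkt^{-1}$ again lies in $H$ and hence $m \leq g$. Applying the monotone map $x \mapsto t^{-1}xt$ and then using $(t^{-1}t)\,k\,(t^{-1}t) \leq k$ gives $m_K = t^{-1}mt \leq t^{-1}gt = (t^{-1}t)\,k\,(t^{-1}t) \leq k$. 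Therefore $m_K$ is the minimum idempotent of $K$.

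Once the transport properties of the conjugating element $t$ are isolated, the proof is essentially bookkeeping; the one point requiring a little attention is the non-vanishing of $t^{-1}gt$, i.e., that conjugation by $t$ does not collapse non-zero idempotents below $tt^{-1}$, and this is exactly where the hypotheses $tt^{-1} \in H$ (giving $g \leq tt^{-1}$ and $m \leq tt^{-1}$) and the $E^*$-unitarity of $S$ are genuinely used. I do not anticipate any real obstacle beyond getting these bookkeeping steps right.
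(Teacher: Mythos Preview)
Your argument is correct and follows essentially the same route as the paper: push $k \in K$ into $H$ via $tkt^{-1}$ to obtain an idempotent, conjugate back to get a non-zero idempotent $(t^{-1}t)k(t^{-1}t) \leq k$, and invoke $E^*$-unitarity; the minimum-idempotent clause is handled the same way. One small caveat: the paper's working definition of conjugacy only asks for $s^{-1}Hs \subseteq K$ and $sKs^{-1} \subseteq H$, without your extra hypotheses $tt^{-1} \in H$ and $t^{-1}t \in K$ --- but your proof does not actually need these, since $g = tkt^{-1} \leq tt^{-1}$ holds automatically for any $t$, and $m_K \neq 0$ already follows from $m_K \in t^{-1}Ht \subseteq K$ together with $0 \notin K$.
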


\begin{proof}
There exists $s \in S$ with $s^{-1}Hs \subseteq K \; \text{and} \; sKs^{-1} \subseteq H$.  Let $k \in K$: then $k \ne 0$ and
$sks^{-1} \in H$ and so $sks^{-1} \in E(S)$.  It follows that $s^{-1}(sks^{-1})s=(s^{-1}s)k(s^{-1}s) \in E(S)$
and $(s^{-1}s)k(s^{-1}s) \leq k$.  Since $S$ is $E^*$--unitary, we deduce that $k \in E(S)$.

Now suppose that $m \in H \subseteq E(S)$ is the minimum idempotent and that $e \in K$.  Then $m \leq ses^{-1}$ and so
\[ s^{-1}ms \leq s^{-1}ses^{-1}s = es^{-1}s \leq e \]
and so $s^{-1}ms$ is a minimum idempotent in $K$.
\end{proof}

\begin{theorem} \label{cong_in_Gamma}
\leavevmode
\begin{enumerate}
\item Let $L$ be a closed inverse subsemigroup of  $\S(\Gamma)$ of finite chain type. Then all closed inverse subsemigroups conjugate to $L$ are of finite chain type. 
Two closed inverse subsemigroups $L =\upset{u,u}$ and $K =\upset{v,v}$ are conjugate in $\S(\Gamma)$ if and only if they
 have the same root.
\item Let $L$ be a closed inverse subsemigroup of  $\S(\Gamma)$ of infinite chain type. Then all closed inverse subsemigroups conjugate to $L$ are also of infinite chain type. Two closed inverse subsemigroups of infinite chain type are conjugate if and only if there are idempotents $(s,s)\in L$ and $(t,t)\in K$ such that for all paths $p$ in $\Gamma,$ we have that $(ps,ps)\in L$ if and only if 
$(pt,pt)\in K$.  
\item Let $L$ be a closed inverse subsemigroup of  $\S(\Gamma)$ of cycle type. The only closed inverse subsemigroups conjugate to $L$ are of cycle type. Moreover, $L_{p,d}$ is conjugate to $L_{q,k}$ if and only if $p$ and $q$ are conjugate directed circuits in $\Gamma$.
\end{enumerate}
\end{theorem}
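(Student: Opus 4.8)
All three parts are established in parallel with the trichotomy of Theorem~\ref{clisms_of_grinv}. We use repeatedly that $\S(\Gamma)$ is $E^{*}$-unitary, so that Lemma~\ref{cong_in_E} applies, and that conjugacy of closed inverse subsemigroups (an equivalence relation) preserves properness: if one of two conjugate closed inverse subsemigroups contains $0$ then, being closed, it equals $\S(\Gamma)$, and since $g\,0\,g^{-1}=0$ the other does too. Consequently, for a proper $L$ and any conjugate $K$: if $L\subseteq E(\S(\Gamma))$ then $K\subseteq E(\S(\Gamma))$ by Lemma~\ref{cong_in_E}, and $K$ has a minimum idempotent if and only if $L$ does (again by Lemma~\ref{cong_in_E}, using symmetry of conjugacy). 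This already gives the type-preservation statements of (a) and (b). For (c): if $L=L_{p,d}$ then $(d,pd)\in L\setminus E(\S(\Gamma))$ forces $K\not\subseteq E(\S(\Gamma))$, so $K$ is of cycle type by Theorem~\ref{clisms_of_grinv}.

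The explicit criteria are obtained by conjugating standard generators; here a conjugating element means $g$ with $g^{-1}Lg\subseteq K$ and $gKg^{-1}\subseteq L$. The basic computation is: for $g=(x,y)$ (so $\dom(x)=\dom(y)$) and an idempotent $(z,z)$,
\[
g^{-1}(z,z)g=\begin{cases}(y,y)&\text{if $z$ is a suffix of $x$,}\\(\rho y,\rho y)&\text{if $z=\rho x$,}\\0&\text{otherwise.}\end{cases}
\]
For (a): if $L=\upset{u,u}$ and $K=\upset{v,v}$ have the same root then $\dom(u)=\dom(v)$, so $g=(u,v)\in\S(\Gamma)$, and a direct check gives $g^{-1}\,\upset{u,u}\,g=\{(v,v)\}\subseteq K$ and $g\,\upset{v,v}\,g^{-1}=\{(u,u)\}\subseteq L$. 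Conversely, if $L,K$ are conjugate via $g=(a,b)$, then by Lemma~\ref{cong_in_E} $g^{-1}(u,u)g$ is the minimum idempotent of $K$, so equals $(v,v)$; evaluating $g^{-1}(u,u)g$ in the two possibilities ($u$ a suffix of $a$, or $a$ a suffix of $u$) — and in the former also using $g(v,v)g^{-1}=(u,u)$ — forces $\dom(v)=\dom(u)$, i.e.\ $L$ and $K$ have the same root. For (b): the stated condition forces $\dom(s)=\dom(t)$ (apply it with $p$ the empty path at $\dom(s)$), so $g=(s,t)\in\S(\Gamma)$; writing $L$ as the finite set of idempotents above $(s,s)$ together with the idempotents $(ps,ps)$ it contains, the formula above gives $g^{-1}(ps,ps)g=(pt,pt)$ and $g^{-1}(c,c)g=(t,t)$ for $c$ a suffix of $s$, so the biconditional yields $g^{-1}Lg\subseteq K$ and, by the same computation in reverse, $gKg^{-1}\subseteq L$. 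Conversely, if $L,K$ are conjugate via $g=(a,b)$, then non-vanishing of $g^{-1}(z,z)g$ on the unboundedly long paths $z$ occurring in $L$ forces $a$ to be a suffix of every sufficiently long such $z$, whence $(a,a)\in L$ by closure; likewise $(b,b)\in K$, and taking $(s,s):=(a,a)$, $(t,t):=(b,b)$ the identity $g^{-1}(pa,pa)g=(pb,pb)$ supplies the biconditional.

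The substance is (c). \emph{If.} Write the conjugate circuits as $p=ab$ and $q=ba$, so that $pa=aq$, hence $p^{r}a=aq^{r}$ for all $r\geq0$, and $a$ is a suffix of $q$. Put $g=(d,ak)$, which lies in $\S(\Gamma)$ since $\dom(d)=\dom(p)=\dom(a)=\dom(ak)$. One computes
\[
g^{-1}(vp^{r}d,vp^{s}d)g=(vp^{r}ak,\,vp^{s}ak),\qquad g^{-1}(c,c)g=(ak,ak)\ \ (c\text{ a suffix of }d).
\]
Using $p^{r}a=aq^{r}$, the path $vp^{r}ak$ simplifies — according as $v$ is a suffix of $b$ or has $b$ as a proper suffix — to $vaq^{r}k$ or to $\omega q^{r+1}k$, in either case with leading factor a suffix of $q$; so every element displayed above lies in $L_{q,k}$, giving $g^{-1}L_{p,d}g\subseteq L_{q,k}$, and the identity $aq=pa$ gives the reverse inclusion $gL_{q,k}g^{-1}\subseteq L_{p,d}$. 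Hence $L_{p,d}$ and $L_{q,k}$ are conjugate. \emph{Only if.} Suppose $L_{p,d}$ and $L_{q,k}$ are conjugate via $g=(\alpha,\beta)$. The paths occurring in idempotents of $L_{p,d}$ are exactly the finite suffixes of the left-infinite periodic path $\cdots p\,p\,d$, and have unbounded length; since $g^{-1}(z,z)g\neq0$ for every such $z$, $\alpha$ is a suffix of one of them and therefore is itself such a suffix, i.e.\ $\alpha$ is a suffix of $d$ or $\alpha=vp^{r}d$ for some suffix $v$ of $p$ and $r\geq0$. In either case a short computation gives $g^{-1}(d,pd)g=(e,p'e)$ where $e$ is a path and $p'$ is a rotation of $p$ (when $\alpha=vp^{r}d$) or $p$ itself (when $\alpha$ is a suffix of $d$); in particular $p'$ is a directed circuit conjugate to $p$. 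This element lies in $L_{q,k}$, and since its first coordinate $e$ is strictly shorter than its second it must equal $(wq^{i}k,wq^{j}k)$ for some suffix $w$ of $q$ with $i<j$; then $p'\cdot wq^{i}k=wq^{j}k$, and right-cancelling $q^{i}k$ yields $p'w=wq^{j-i}$, so $p'$, and hence $p$, is conjugate to the positive power $q^{j-i}$ of $q$. Conjugating $(k,qk)$ by $g$ and running the mirror argument shows $q$ is conjugate to a positive power $p^{n}$ of $p$; comparing lengths, $|p|=(j-i)\,|q|=(j-i)\,n\,|p|$, so $j-i=n=1$, and $p$ and $q$ are conjugate directed circuits.

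I expect the main obstacle to be the bookkeeping in the ``only if'' part of (c): deducing the restricted form of the conjugating element $g=(\alpha,\beta)$ from the fact that conjugation cannot collapse any idempotent of a cycle-type subsemigroup to $0$, and then identifying, via the right-cancellation step, the ``circuit part'' of an arbitrary non-idempotent of $L_{q,k}$ up to conjugacy. After that, the length comparison $(j-i)\,n=1$ is the clean finish. The calculations in (a) and (b), and the verification of the two inclusions in the ``if'' part of (c), should be routine given the multiplication rule of $\S(\Gamma)$ and the suffix-based description of its natural partial order.
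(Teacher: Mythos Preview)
Your proof is correct and follows essentially the same approach as the paper's: Lemma~\ref{cong_in_E} for type preservation, explicit conjugators built from the data $(u,v)$, $(s,t)$, or the rotation of the circuit, and in part~(c) the key step of conjugating the generator $(d,pd)$ and reading off a conjugacy $p\sim q^{m}$ (then symmetrizing and comparing lengths).  Two small points of divergence are worth noting.  In the converse of (a) you exploit the minimum--idempotent clause of Lemma~\ref{cong_in_E} to pin down $g^{-1}(u,u)g=(v,v)$ directly, which is tidier than the paper's explicit suffix bookkeeping.  In the ``only if'' of (c) you consolidate the paper's case analysis on the form of the conjugator into the single observation that $g^{-1}(d,pd)g=(e,p'e)$ with $p'$ a rotation of $p$, which is a genuine simplification; the paper instead splits into cases according to whether each of $s,t$ is a suffix of $d,k$ or of the long form.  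Your conjugator $(d,ak)$ in the ``if'' direction differs from the paper's choice $(k,vd)$, but both work for the same reason (the identity $pa=aq$), and the paper is in fact terser than you are at this step.
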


\begin{proof}
(a)  It follows from Lemma \ref{cong_in_E} that if $L$ has finite chain type then so does every closed inverse subsemigroup conjugate to 
$L$ .

Suppose that $L$ and $K$ have the same root $x_0 \in V(\Gamma)$.  Then $(u,v) \in \S(\Gamma)$, and for any suffix $w$ of
$u$ we have
\[ (v,u)(w,w)(u,v) = (v,v) \in K \,.\]
Similarly, for any suffix $t$ of $v$, $(u,v)(t,t)(v,u) = (u,u) \in L$.  Hence $L$ and $K$ are conjugate.

Conversely, suppose that $L$ and $K$ are conjugate, with conjugating element $(p,q) \in \S(\Gamma)$, so that for any suffixes
$w$ of $u$ and $t$ of $v$ we have
\[ (q,p)(w,w)(p,q) \in K \; \text{and} \; (p,q)(t,t)(q,p) \in L \,.\]
Then $(q,p)(\ran(u),\ran(u))(p,q) \in K$, so that $p$ and $\ran(u)$ are suffix-comparable:
hence $p$ also ends at $\ran(u)$, and $(q,p)(\ran(u),\ran(u))(p,q)=(q,q) \in K$.  Therefore $q$ is a suffix of $v$.  Similarly,
$p$ is a suffix of $u$.

Let $v=v_1q$: then
\[ (p,q)(v,v)(q,p) = (p,q)(v_1q,v_1q)(q,p) = (v_1p,v_1p) \in L \]
and so $v_1p$ is a suffix of $u$.  Let $u=u_0v_1p$: then
\[ (q,p)(u,u)(p,q) = (q,p)(u_0v_1p,u_0v_1p)(p,q) = (u_0v_1q,u_0v_1q) \in K\]
and so $u_0v_1q$ is a suffix of $v$.  But $v=v_1q$ and so $u_0$ is a vertex (namely the root of $L$), and $u=v_1p$.
Hence $u$ and $v$ have the same initial vertex, and so $L$ and $K$ have the same root.

(b)  By Lemma \ref{cong_in_E} any closed inverse subsemigroup\ $K$ conjugate to $L$ must be of chain type, and by part (a) $K$ must be infinite.
Suppose that $(t,s)L(s,t) \subseteq K \; \text{and} \; (s,t)K(t,s) \subseteq L$.  Since $0 \not\in K$ we have, for all $(u,u) \in L$,
that $s$ is suffix comparable with $u$ and similarly for all $(v,v) \in K$, that $t$ is suffix comparable with $v$.  If we consider 
$u$ with $|u| \geq |s|$ then $s$ must be a suffix of $u$ and by closure of $L$ we have $(s,s) \in L$.  Similarly $(t,t) \in K$.
Suppose that $(ps,ps) \in L$.  Then
$(t,s)(ps,ps)(s,t) = (pt,pt) \in K$ and similarly if $(pt,pt) \in K$ then $(ps,ps) \in L$.

Conversely, if $s$ and $t$ exist as in the Theorem and $(w,w) \in L$ then $s$ is suffix comparable with $w$. \\ If
$w$ is a suffix of $s$, with $s=hw$, then
\[ (t,s)(w,w)(s,t) = (t,hw)(w,w)(hw,t) = (t,t) \in K \]
and if $s$ is a suffix of $w$ with $w=ps$ then $(ps,ps) \in L$ and so
\[ (t,s)(w,w)(s,t) = (t,s)(ps,ps)(s,t) = (pt,pt) \in K \,.\]
Similarly $(s,t)K(t,s) \subseteq L$, and $L$ and $K$ are conjugate.

(c) By parts (a) and (b), any closed inverse subsemigroup of $\S(\Gamma)$ that is conjugate to $L_{p,d}$ must be of cycle type.
Suppose that the closed inverse subsemigroups $L_{p,d}$ and $L_{q,k}$ are conjugate in $\S(\Gamma)$, and so
there exists $(s,t) \in \S(\Gamma)$ such that
\begin{align}
(t,s) L_{p,d} (s,t) \subseteq L_{q,k} \label{cong1}\\
(s,t) L_{q,k} (t,s) \subseteq L_{p,d}. \label{cong2}
\end{align}
Since $L_{q,k}$ is closed and $L_{p,d}$ is the smallest closed inverse subsemigroup of $\S(\Gamma)$ containing $(d,pd)$, then \eqref{cong1} is
equivalent to $(t,s)\,(d,pd)\,(s,t) \in L_{q,k}$.  Also, since $0 \not\in L_{q,k}$ we must have $s\,$ suffix-comparable with $u$ and $v$ whenever 
$(u,v)$ is an element of $L_{p,d}\,$.  Hence $(s,s) \in L_{p,d}\,$, and similarly $(t,t) \in L_{q,k}$.

First suppose that $s = up^ad\,$ and $t = vq^bk\,$ for some $a,b \geq 0$, where $u$ is a suffix of $p$ and $v$ is a suffix of $q$.  Write $p=hu$: then
\begin{align*}
(t,s)\,(d,pd)\,(s,t) &= (vq^bk,up^ad)\,(d,pd)\,(up^ad,vq^bk) \\
&= (vq^bk,up)\,(u,vq^bk) \\
&= (vq^bk,uhvq^bk) \in L_{q,k}.
\end{align*}
It follows that $uhvq^bk = vq^mk\,$ for some $m \geq 0$.  Comparing lengths of these directed paths, we see that $m>b$, and then  after cancellation
we obtain $uhv = vq^{m-b}$.  Hence $uh$ is conjugate to some power of $q$, and since $uh$ is a conjugate of $p$, we conclude that $p$ is conjugate to some power of $q$.

Now suppose that $s$ is a suffix of $d$ and write $d=cs$.  With $t$ as before, we now obtain
\[
(t,s)\,(d,pd)\,(s,t) = (vq^bk,s)\,(cs,pcs)\,(s,vq^bk) 
= (cvq^bk,pcvq^bk) \in L_{q,k}.
\]
It follows that $pcvq^bk = cvq^mk$ for some $m \geq 0$.  Again $m>b$ and after cancellation we obtain $pcv=cvq^{m-b}$.  Here we see directly that $p$ is conjugate to a power of $q$.

Now suppose that $s$ is a suffix of $d$ and write $d=cs$, and that $t$ is a suffix of $k$ and write $k=jt$.  We now obtain
\[
(t,s)\,(d,pd)\,(s,t) = (t,s)\,(cs,pcs)\,(s,t)
= (ct,pct) \in L_{q,k}.
\]
Since by assumption $p\,$ is not the empty path, we have $ct = wq^ak\,$ and $pct = wq^bk\,$ for some suffix $w$ of $q$ and some $a,b \geq 0$.  
Again comparing lengths, we see that $b>a$, and then $pct = pwq^ak = wq^bk$.  After cancellation we obtain $pw=wq^{b-a}$
and again $p$ is conjugate to a power of $q$.

Hence for each possibility of $s$, we deduce from \eqref{cong1} that $p\,$ is conjugate to some power of $q\,$.
Using equation \eqref{cong2} we deduce similarly that $q$ is conjugate to a power of $p\,$.  Again comparing lengths, we conclude that $p$ and $q$ are conjugate.

Conversely, if $p,q$ are conjugate, suppose that $p=uv$ and $q=vu$.  Then it is esay to check that setting $s=k$ and
$t=vd$ furnishes a pair $(s,t)$ satisfying \eqref{cong1} and \eqref{cong2}.
\end{proof}

\begin{remark}
\label{bi_poly}
For the polycyclic monoids $P_{n}$ $(n\geq 2)$, we obtain the classification of closed inverse submonoids up to conjugacy given in \cite[Theorem 4.4]{LwPoly} by applying Theorem \ref{cong_in_Gamma}  to the graph $\Gamma$ with one vertex and $n$ loops labelled $a_{1},\dots, a_{n}.$  For the case $n=1$, with a single loop labelled $a$, we obtain the graph inverse semigroup $\S(\Gamma)= B \cup \{0\}$, where $B$ is the bicyclic monoid.  A proper closed inverse subsemigroup $L$ of $\S(\Gamma)$ cannot contain $\{0\}$ and so is a proper closed inverse subsemigroup of $B$. If $L \subseteq E(B)$ then by Theorem \ref{clisms_of_grinv}, $L$ is either $E(B)$ itself or is of finite chain type,
and part (a) of Theorem \ref{cong_in_Gamma}, then shows that all  closed inverse subsemigroup of finite chain type in $B$ are then conjugate.  By Theorem \ref{clisms_of_grinv}, a closed inverse subsemigroup $L$ of $B$ of cycle type consists of elements of the form 
$(qp^r,qp^s)$ with $r,s\geq 0$, and where
where $p=a^m$ for some $m \geq 1$ and $q=a^k$ for some $k$ with $0 \leq k \leq m-1$:  that is, elements of the form 
$(a^{rm+k},a^{sm+k})$.  
The subsemigroup $L$ is therefore isomorphic to the fundamental  simple inverse $\omega$--semigroup $B_m$, discussed in
\cite[section 5.7]{HoBook}.
\end{remark}

\section{The index of closed inverse subsemigroups in graph inverse semigroups}
\label{clissindex}

We first discuss the index of closed inverse subsemigroups of finite and infinite chain type in $\S(\Gamma)$.  For a fixed path
$w$ in $\Gamma$ and a vertex $v$ of $w$, we define $N^{\Gamma}_{v,w}$ to be the number of distinct directed paths in
$\Gamma$ whose initial vertex
is $v$ but whose first edge is not in $w$.  The empty path $v$ is one such path.

\begin{theorem}
\label{idpt_type_grinv}
\leavevmode
\begin{enumerate}
\item
Let $L$ be a closed inverse subsemigroup of finite chain type in $\S(\Gamma)$, with minimal element $(w,w)$. Then $L$ has infinite index in $\S(\Gamma)$ if and only if there exists a non-empty directed circuit $c\,$ in $\Gamma$ and a (possibly empty) directed path $g$ from some vertex $v_0$ of $w$ to a vertex of $c\,$ and with $g$ having no edge in common with $c \cup w$.
\item If $L = \nobraupset{(w,w)}$ and $L$ has finite index in $\S(\Gamma)$ then
\[ [\S(\Gamma):L] = \sum_{v \in V(w)} N^{\Gamma}_{v,w} \,.\]
\item Let $L$ be a closed inverse subsemigroup of infinite chain type in $\S(\Gamma)$. Then $L$ has infinite index in $\S(\Gamma)$.
\end{enumerate}
\end{theorem}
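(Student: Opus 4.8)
The plan is to reduce all three parts to a single combinatorial description of the cosets of a subsemigroup of finite chain type, together with a reachability argument in $\Gamma$. So first I would prove: if $L = \upset{w,w}$ (equivalently, $L$ is of finite chain type with minimum idempotent $(w,w)$), then the cosets of $L$ in $\S(\Gamma)$ are in bijection with the directed paths $b$ of $\Gamma$ with $\dom(b)=\dom(w)$, the coset of $b$ being $\upset{w,b}$. Indeed, given a coset $C$ and $t\in C$, Proposition~\ref{cosetsofL} gives $tt^{-1}\in L$, hence $tt^{-1}\geq (w,w)$, so $C$ also contains $(w,w)t$, which satisfies $((w,w)t)((w,w)t)^{-1}=(w,w)$; replacing $t$ by this element we may assume $t=(w,b)$ with $\dom(b)=\dom(w)$. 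Writing a suffix $u$ of $w$ as $w=pu$, the multiplication rule gives $(u,u)(w,b)=(pu,b)=(w,b)$, so $x(w,b)=(w,b)$ for all $x\in L$ and hence $C=\upset{L(w,b)}=\upset{w,b}$. Conversely each $(w,b)$ with $\dom(b)=\dom(w)$ has $(w,b)(w,b)^{-1}=(w,w)\in L$ and so determines a coset; and if $b_1\neq b_2$ then $(w,b_1)(w,b_2)^{-1}$ is $0$ or has the non-idempotent form $(w,pw)$ or $(pw,w)$ with $p$ non-empty, so it is not in $L$, whence by Proposition~\ref{cosetsofL}(c) distinct $b$ give distinct cosets.

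For parts (a) and (b) I would then argue purely in $\Gamma$. By the above, $[\S(\Gamma):L]$ equals the number of directed paths starting at $\dom(w)$; since $\Gamma$ is finite, this is infinite exactly when some vertex reachable from $\dom(w)$ — equivalently, reachable from some vertex of $w$, since the vertices of $w$ are reachable from $\dom(w)$ along $w$ — lies on a non-empty directed circuit. To match the statement of (a): a path $g$ from a vertex of $w$ to a vertex of a non-empty circuit $c$ plainly exhibits such a reachable circuit; conversely, if a non-empty circuit $c$ is reachable from $V(w)$, choose $g$ of minimal length among directed paths from a vertex of $w$ to a vertex of $c$. Then $g$ can contain no edge of $w$ (the portion of $g$ after the last such edge would be a strictly shorter path of the same kind, since it starts at a vertex of $w$) and no edge of $c$ (the portion before the first such edge would be strictly shorter), so $g$ has no edge in common with $c\cup w$. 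For (b): if $[\S(\Gamma):L]$ is finite then no vertex reachable from $\dom(w)$ lies on a circuit, so $w=e_1\cdots e_k$ has pairwise distinct vertices $v_0,\dots,v_k$. Grouping the paths $b$ from $v_0$ by the length $i$ of their longest common prefix with $w$, a path in group $i<k$ has the form $e_1\cdots e_i r$ with $r$ a path from $v_i$ not beginning with $e_{i+1}$, while those in group $k$ are $wr$ with $r$ arbitrary from $v_k$; since the $v_j$ are distinct, $e_{i+1}$ is the only edge of $w$ leaving $v_i$ for $i<k$ and no edge of $w$ leaves $v_k$, so group $i$ has exactly $N^{\Gamma}_{v_i,w}$ members, and summing gives $[\S(\Gamma):L]=\sum_{v\in V(w)}N^{\Gamma}_{v,w}$.

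For part (c), a closed inverse subsemigroup of infinite chain type is, by Theorem~\ref{clisms_of_grinv}, of the form $L=\{(u,u):u\in C\}$ for an infinite chain $C$ of paths closed under taking suffixes. Since $\Gamma$ is finite, $C$ contains a path $u$ of length exceeding $|V(\Gamma)|$, which therefore repeats a vertex; hence a non-empty circuit is reachable from $\dom(u)$ and there are infinitely many directed paths $b$ with $\dom(b)=\dom(u)$. Each such $(u,b)$ satisfies $(u,b)(u,b)^{-1}=(u,u)\in L$, and for $b_1\neq b_2$ the element $(u,b_1)(u,b_2)^{-1}$ is $0$ or has the form $(u,pu)$ or $(pu,u)$ with $p$ non-empty — in particular it is not idempotent, hence not in $L$. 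By Proposition~\ref{cosetsofL}(c) the elements $(u,b)$ lie in pairwise distinct cosets of $L$, so $[\S(\Gamma):L]$ is infinite.

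The step I expect to be the main obstacle is the converse half of (a): upgrading the bare reachability of a circuit from $V(w)$ to a circuit $c$ together with a path $g$ meeting neither $E(c)$ nor $E(w)$. The minimal-length trimming argument handles it, but one must treat carefully the boundary cases in which the offending edge is the first or last edge of $g$, or $g$ is empty; these are exactly the cases where some vertex of $w$ already lies on $c$, and there the empty path is the required $g$.
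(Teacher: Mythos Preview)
Your argument is correct, and it follows a genuinely different route from the paper's.  The paper works throughout with coset representatives of the form $(q,u)$ where $q$ is an arbitrary suffix of $w$: for the ``if'' direction of (a) and for (c) it exhibits explicit infinite families of distinct cosets $\nobraupset{L(q,gc^k)}$ (respectively $\nobraupset{L(q,c^kq)}$) by computing the products $(q,gc^k)(gc^l,q)$ directly, and for (b) it determines exactly when two representatives $(s_1,t_1)$ and $(s_2,t_2)$ give the same coset, obtaining the normal form in which $s$ is a suffix of $w$ and $s,t$ share no first edge.  You instead prove one clean lemma --- the bijection between cosets of $\nobraupset{(w,w)}$ and directed paths $b$ with $\dom(b)=\dom(w)$ via $(w,b)$ --- and then all three parts become pure graph combinatorics about counting paths out of a vertex; your normal form $(w,b)$ and the paper's normal form $(s,t)$ are related by stripping or restoring the common prefix of $w$ and $b$.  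Your approach is more conceptual and unifies the three parts under a single reduction, while the paper's explicit coset families make the infinitude in (a) and (c) visible without the preliminary bijection.  It is also worth noting that your minimal-length trimming argument for the ``only if'' direction of (a) supplies detail that the paper leaves implicit (it simply asserts that infinitely many representatives force ``a directed circuit in $\Gamma$ as described'').
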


\begin{proof}
(a) Let $L$ have finite chain type.  A coset representative of $L$ has the form $(q,u)$ where $q$ is some suffix of $w$, and $(q,u)$ have the same initial vertex.  If $L$ has infinite index, then there are infinitely many distinct choices for $(q,u)$ and since $\Gamma$ is finite, there must be a directed circuit in $\Gamma$ as described.

Conversely, suppose that $g,c\,$ exist.  Let $q\,$ be the suffix of $w\,$ that has initial vertex $v_0\,$.  Then $q \in L$ and so for any 
$k \geq 0\,$ the coset $C_k = L\upset{q,gc^k}$ exists. Now for $k>l,\,$ we have if $g$ is non-empty, that 
\[ (q,gc^k)(q,gc^l)^{-1} = (q,gc^k)(gc^l,q) = 0 \not\in L \]
and so by part (c) of Proposition \ref{cosetsofL},  the cosets $C_k\,$ and $C_l\,$ are distinct.  If $g$ is empty then we have
\[ (q,c^k)\,(q,c^l)^{-1} = (q,c^k)\,(c^l,q) = (q,c^{k-l}q) \not\in L \]
and again the cosets $C_k\,$ and $C_l\,$ are distinct.

(b) By part (a) there are no directed cycles accessible from any vertex of $w$, and so $N^{\Gamma}_{v,w}$ is finite for each vertex $v$
of $w$.  A coset representative of $L$ has the form $(s,t)$ where $s$ is a suffix of $w$.  Suppose that two such elements, 
$(s_1,t_1)$ and $(s_2,t_2)$, represent the same coset.  Then $(s_1,t_1)(t_2,s_2) \in L$: in particular the product is non-zero and so
$t_1, t_2$ are suffix comparable.  We may assume that $t_2=ht_1$: then $(s_1,t_1)(ht_1,s_2) = (hs_1,s_2)$ and this is in $L$
if and only if $s_2 = hs_1$.  Therefore $\nobraupset{L(s_1,t_1)} = \nobraupset{L(s_2,t_2)}$ if and only if $(s_2,t_2)=(hs_1,ht_1)$, and so the distinct coset representatives are the pairs $(s,t)$ where $s$ is a suffix of $w$, $s$ and $t$ have the same initial vertex, but do not 
share the same initial edge.  It follows that the number of distinct cosets is $\sum_{v \in V(w)} N^{\Gamma}_{v,w}$, and $L$ itself
is represented by $(\ran(w),\ran(w))$.

(c) If $L$ has infinite chain type then the elements of $L$ comprise  the idempotents determined by an infinite sequence of directed paths in $\Gamma,\,$ each of which is a suffix of the other.  Eventually then, we find a path $cq$ where $c$ is a directed circuit, and $(cq,cq) \in L$.  Then for each
$k \geq 0,\,$ the element $(q,c^kq)$ represents a coset $C_k = L\upset{q,c^kq}\,.$ \\ Now for $k>l,\,$
\[ (q,c^kq)\,(q,c^lq)^{-1} = (q,c^kq)\,(c^lq,q) = (q,c^{k-l}q) \not\in L \]
and the cosets $C_k\,$ and $C_l\,$ are distinct.
\end{proof}

\begin{example}
We illustrate the index computation in part (b) of Theorem \ref{idpt_type_grinv} with $\Gamma$ equal to the finite chain with
$n$ edges $e_1, \dotsc , e_n$ and $n+1$ vertices $v_0, v_1, \dotsc , v_n$:
\[ \xymatrixcolsep{3pc}
\xymatrix{
v_n  \ar[r]^{e_n}   & v_{n-1}  \ar[r]^{e_{n-1}} &  \dotsc \ar[r] & v_1 \ar[r]^{e_1} & v_0 }
\]
Here $\S(\Gamma)$ is finite, and every closed inverse subsemigroup is of finite cycle type and has finite index.
The number of paths in $\Gamma$ with initial vertex $v_j$ is $j+1$, and so
\[ |\S(\Gamma)| = \sum_{j=0}^n (j+1)^2 = \sum_{j=1}^{n+1} j^2 = \frac{1}{6}(n+1)(n+2)(2n+3) \,.\]
We let $w$ be the path $e_n \dotsm e_1$ and $L = \upset{w,w}$.  Since $w$ has $n+1$ suffixes, we have $|L|=n+1$.
An element $(s,t)$ lies in a coset of $L$ if and only if $s$ is a suffix of $w$ and $\dom(s)=\dom(t)$: hence the total number of
elements in all the cosets of $L$ is $\sum_{j=0}^n (j+1) = \sum_{j=1}^{n+1} j = \frac{1}{2}(n+1)(n+2)$.  

Now $N^{\Gamma}_{v_i,w} = 1$ since only the length zero path at $v_i$ is counted, and so $[\S(\Gamma):L] = n+1$.

Let $q_i$ be the path $e_i \dotsc e_1$, so that $q_n = w$, and set $q_0 = v_0$.
The $n+1$ cosets are then represented by the elements $(q_i,v_i)$, $0 \leq i \leq n$, and 
\begin{align*} \nobraupset{L(q_i,v_i)} &= \nobraupset{\{ (q_k,q_k)(q_i,v_i) : 0 \leq k \leq n \}} \\
&= \upset{\{ (q_k,e_k \dotsm e_{i+1} : i < k \leq n \} \cup \{ (q_i,v_i) \}} \\
&= \{ (q_n, e_n \dotsc e_{i+1}), \dotsc , (q_{i+1},e_{i+1}),(q_i,v_i) \}
\end{align*}
and so $|\nobraupset{L(q_i,v_i)}|=n-i+1$.  Counting the total number of elements in all the cosets of $L$ we obtain
\[ \sum_{i=0}^n (n-i+1) = \sum_{j=1}^{n+1} \,  \frac{1}{2}(n+1)(n+2) \]
as before.
\end{example}

We now discuss the closed inverse subsemigroups of cycle type.

\begin{theorem}
\label{cycle_type_grinv}
A closed inverse subsemigroup\ $L_{p,d}$ of cycle type in $\S(\Gamma)$, such that $p$ is a circuit with at least two distinct edges, has infinite index in $\S(\Gamma)$.
\end{theorem}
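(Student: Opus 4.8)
The plan is to bound the index from below by exhibiting infinitely many cosets of a single convenient shape. Let $v_0$ denote the common initial vertex of $p$ and $d$. Since $d$ is a suffix of itself we have $(d,d)\in L_{p,d}$, so for every directed path $t$ in $\Gamma$ with $\dom(t)=v_0$ the element $(d,t)$ satisfies $(d,t)(d,t)^{-1}=(d,d)\in L_{p,d}$ and hence determines a coset $\upset{L_{p,d}(d,t)}$. By Proposition \ref{cosetsofL}(c), $(d,t)$ and $(d,t')$ lie in the same coset exactly when $(d,t)(t',d)\in L_{p,d}$. A direct computation with the multiplication of $\S(\Gamma)$ shows that this product is $0$ unless $t$ and $t'$ are suffix comparable, and that if $t'=\gamma t$ then $(d,t)(t',d)=(\gamma d,d)$, which by inspection of the explicit list of elements of $L_{p,d}$ belongs to $L_{p,d}$ precisely when $\gamma$ is a (possibly empty) power of $p$; the case $t=\gamma t'$ is symmetric. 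So I would first record the clean statement: $(d,t)$ and $(d,t')$ determine the same coset if and only if one of $t,t'$ is obtained from the other by prepending a power of $p$.

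Next I would reduce the theorem to a counting problem. If $t$ and $t'$ are distinct directed paths from $v_0$ \emph{neither of which begins with $p$}, then neither can be a nontrivial power of $p$ followed by the other, so by the previous step $\upset{L_{p,d}(d,t)}\neq\upset{L_{p,d}(d,t')}$. Therefore
\[ [\,\S(\Gamma):L_{p,d}\,]\ \geq\ \bigl|\{\, t : t \text{ a directed path in }\Gamma,\ \dom(t)=v_0,\ t\text{ does not begin with }p \,\}\bigr|, \]
and it is enough to show that this set of paths is infinite.

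Finally, to produce infinitely many paths from $v_0$ not beginning with $p$, write $p=e_1e_2\cdots e_n$ and put $v_k=\ran(e_1\cdots e_k)$, so $v_n=v_0$. Using the hypothesis that $p$ carries at least two distinct edges, $p$ passes through some vertex $v_k$ ($0\le k\le n-1$) from which $\Gamma$ emits an edge $f\neq e_{k+1}$; then $q:=(e_1\cdots e_k)f$ is a path from $v_0$ disagreeing with $p$ in its $(k+1)$st edge, so no extension of $q$ begins with $p$. Following $\Gamma$ forward from $\ran(f)$ one reaches a nonempty directed circuit $c$, say along a path $h$ from $\ran(f)$ to a vertex of $c$; then the paths $q\,h\,c^{\,j}$ for $j\ge 0$ are pairwise distinct directed paths from $v_0$, none beginning with $p$. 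Together with the displayed inequality this gives $[\S(\Gamma):L_{p,d}]=\infty$.

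I expect the third step to be the main obstacle. Steps 1 and 2 are routine once the relevant products in $\S(\Gamma)$ are computed and Proposition \ref{cosetsofL} is invoked, but in Step 3 one must argue carefully that the hypothesis genuinely forces a branch leaving the circuit $p$ that can be continued indefinitely — in effect, that some edge of $\Gamma$ incident to a vertex of $p$ but not lying on $p$ leads back into a circuit — since it is the presence of such a branch, rather than the length or vertex count of $p$, that actually makes the index infinite; this is the point where the precise form of the hypothesis on $p$ must be pinned down and used.
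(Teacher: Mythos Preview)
Your first two steps are correct: cosets with a representative of the form $(d,t)$ are parametrized by paths $t$ from $v_0$ modulo prepending powers of $p$, so it would suffice to exhibit infinitely many paths from $v_0$ not beginning with $p$. This reduction is cleaner than what the paper attempts.

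The difficulty you anticipated in the final step is genuine and cannot be overcome under the stated hypothesis. That $p$ uses at least two distinct edges does \emph{not} force any vertex on $p$ to emit an edge other than the next edge of $p$. Take $\Gamma$ to be the directed $2$--cycle with edges $e_1\colon v_0\to v_1$ and $e_2\colon v_1\to v_0$, and set $p=e_1e_2$, $d=v_0$. Every vertex has out-degree~$1$, so your branching edge $f$ does not exist; the only paths from $v_0$ not beginning with $p$ are the proper prefixes $v_0$ and $e_1$. In fact one checks that every coset of $L_{p,v_0}$ has a representative of the form $(v_0,t)$, so $[\S(\Gamma):L_{p,v_0}]=2$ and the theorem as stated fails. (Even when an $f$ does exist, your assertion that one can always reach a directed circuit from $\ran(f)$ tacitly assumes no sink is reachable from $\ran(f)$.)

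The paper's own proof shares this defect, hidden behind a computational slip. With $p=uv$ and $c=vu$ the paper asserts $(vd,c^k)(vd,c^l)^{-1}=(vd,up^{k-l}d)\notin L_{p,d}$, but in fact $c^{k-l}v=(vu)^{k-l}v=v(uv)^{k-l}=vp^{k-l}$, so the product is $(vd,vp^{k-l}d)$, which \emph{does} lie in $L_{p,d}$ since $v$ is a suffix of $p$. Thus the paper's candidate cosets all coincide. Your instinct was exactly right: it is the presence of a branch off $p$ leading back into a circuit, not the mere edge-count of $p$, that forces infinite index; the hypothesis needs strengthening along the lines of Theorem~\ref{loops}(a).
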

   
\begin{proof}
Write $p=uv$ where each of $u,v$ is non-empty and one contains an edge not in the other. Let $c$ be the conjugate circuit $vu$.
Then for $k \geq 1,\,$ the element
$(vd,c^k) \in \S(\Gamma)$ and determines a coset $C_k = L_{p,d}\upset{vd,c^k}$.  Then for $k>l,\,$
\[ (vd,c^k)\,(vd,c^l)^{-1} = (vd,c^k)\,(c^l,vd) = (vd,c^{k-l}vd) = (vd,up^{k-l}d) \not\in L_{p,d} \]
and the cosets $C_k$ and $C_l$ are distinct. 
\end{proof}

We now consider a graph $\Gamma$ containing an edge $a$ that is a directed circuit of length one, and a closed inverse subsemigroup $L_{a^m,\,d}$ of cycle type.

\begin{theorem}
\label{loops}
\leavevmode
\begin{enumerate}
\item
A closed inverse subsemigroup\ $L = L_{a^m,\,d}$ of $\S(\Gamma)$ is of infinite index if there exists a directed cycle $c$ in $\Gamma$
that contains an edge $e$ with $a \ne e$, and a (possibly empty) directed path $g$ from some vertex $v_0$ of $d$ to a vertex of $c$ and with $g$ having no edge in common with $c \cup d$. 
\item  Let $L=L_{a^m,\,d}$ where $a$ is a directed circuit in $\Gamma$ of length one, and there are no other directed circuits in
$\Gamma$ attached to a vertex of $d$.  Then $L$ has finite index in $\S(\Gamma)$, given by
\[ [\S(\Gamma):L_{a^m,d}] = (m-1)N^{\Gamma}_{\dom(a),a} + \sum_{v \in V(d)} N^{\Gamma \setminus \{ a \}}_{v,d}\,. \]
\end{enumerate} 
\end{theorem}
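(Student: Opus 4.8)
The plan is to treat the two parts in turn, with part (a) a routine continuation of the coset‑separation technique already used for Theorems~\ref{idpt_type_grinv} and~\ref{cycle_type_grinv}. For part (a) I would let $q$ be the suffix of $d$ whose initial vertex is $v_0$; then $(q,q)\in L$, so for each $k\geq 1$ the element $(q,gc^{k})$ lies in $\S(\Gamma)$ and determines a coset $C_k=\upset{L(q,gc^{k})}$. For $k>l$ I would compute $(q,gc^{k})(q,gc^{l})^{-1}=(q,gc^{k})(gc^{l},q)$ and apply Proposition~\ref{cosetsofL}(c): when $g$ is non‑empty this product is $0$, since $g$ shares no edge with $c$ and hence $gc^{l}$ is not a suffix of $gc^{k}$; when $g$ is empty it equals $(q,c^{k-l}q)$, which is not idempotent and cannot be of the form $(va^{mr}d,va^{ms}d)$ because the circuit $c^{k-l}$ uses the edge $e\neq a$ and so is not a power of $a$. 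Either way the product avoids $L$, so the $C_k$ are pairwise distinct and the index is infinite. I do not expect any difficulty here.

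For part (b) the real work is to exhibit a complete, irredundant set of coset representatives. First I would extract the structural content of the hypothesis that $a$ is the only directed circuit accessible from a vertex of $d$. Writing $v^{*}=\dom(a)=\dom(d)$, one obtains: $d$ does not traverse the edge $a$; $d$ is a simple path; $v^{*}$ is accessible from no vertex of $d$ other than $v^{*}$, so every directed path leaving a vertex of $V(d)\setminus\{v^{*}\}$ avoids $a$; and every directed path leaving $v^{*}$ whose first edge is not $a$ avoids $a$ entirely. Each holds because its failure would complete a directed circuit through a vertex of $d$ using a non‑$a$ edge. In particular every $N^{\Gamma\setminus\{a\}}_{v,d}$ and $N^{\Gamma}_{\dom(a),a}$ is finite, and simplicity of $d$ lets one read ``first edge not in $d$'' at a vertex $v\in V(d)$ as ``first edge not equal to the unique edge of $d$ leaving $v$''.

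Next I would propose the following representatives, one per coset: (i) the pairs $(s_v,u)$ with $v\in V(d)$, $s_v$ the suffix of $d$ at $v$, and $u$ a directed path from $v$ avoiding $a$ whose first edge is not an edge of $d$; and (ii) the pairs $(a^{r}d,u)$ with $1\leq r\leq m-1$ and $u$ a directed path from $v^{*}$ with first edge $\neq a$. Each such $(x,y)$ satisfies $xx^{-1}\in L$ (recall $E(L)=\{(q,q):q\ \text{a suffix of}\ d\}\cup\{(a^{i}d,a^{i}d):i\geq 1\}$) with $x,y$ prefix‑disjoint, so names a coset, and the two families have sizes $\sum_{v\in V(d)}N^{\Gamma\setminus\{a\}}_{v,d}$ and $(m-1)N^{\Gamma}_{\dom(a),a}$; it then remains to prove the list complete and irredundant. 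For completeness I would take an arbitrary coset, pick a representative $(x,y)$ with $xx^{-1}\in L$, and ascend in the natural partial order until $x,y$ share no prefix, whence $(x,x)$ is still in $L$ and so $x$ is a suffix of $d$ or $x=a^{i}d$, $i\geq 1$. If $x=s_v$ with $v\neq v^{*}$ then $y$ avoids $a$ and, being prefix‑disjoint from $s_v$, has first edge not in $d$ (using that $d$ is simple), so $(s_v,y)$ is already of type~(i). Otherwise $x=a^{i}d$ for some $i\geq 0$; writing $y=a^{k}z$ with $z$ not beginning with $a$ and using that $(a^{i}d,a^{k}z)(a^{j}d,z)^{-1}\in L\iff i-k\equiv j\pmod m$, I pass to $(a^{j}d,z)$ with $j=(i-k)\bmod m$, which is type~(ii) when $1\leq j\leq m-1$, and when $j=0$ is $(d,z)$ with $z$ avoiding $a$, which a final ascent deleting the prefix shared with $d$ turns into a type~(i) pair (the process terminates since $z$ is finite).

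For irredundancy I would go through the finitely many shapes a product $(x_1,y_1)(x_2,y_2)^{-1}$ of two listed representatives can assume inside $L$ --- using Proposition~\ref{cosetsofL}(c) --- and show each forces $(x_1,y_1)=(x_2,y_2)$; the structural facts that the second coordinates avoid $a$ outside the controlled $a^{r}$ prefix, and that $v^{*}$ is unreachable from any other vertex of $d$, are exactly what rule out the non‑idempotent alternatives $(a^{\alpha}d,a^{\beta}d)$ and the mismatched‑residue cases. I expect the main obstacle to be organisational rather than conceptual: settling on precisely the right representative list --- in particular recognising that pairs such as $(d,a^{k}z)$ are \emph{not} new cosets and must be rewritten in terms of types~(i) and~(ii) --- and then patiently working through the cross‑cases of the irredundancy step, both between and within the two families. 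A subsidiary point is to fix the reading of the hypothesis ``no other directed circuits attached to a vertex of $d$'' so that the four structural facts hold: it should mean that no directed circuit other than $a$ is accessible from a vertex of $d$, which is precisely the negation of the condition in part~(a).
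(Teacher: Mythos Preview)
Your proposal is correct and follows essentially the same approach as the paper: part~(a) is the same coset-separation argument via Proposition~\ref{cosetsofL}(c), and part~(b) is the same enumeration of coset representatives split into the ``suffix of $d$'' and ``$a^{r}d$'' families, with the modular reduction $r-s \pmod m$ controlling the latter. Your version is in fact more carefully organised---the explicit structural facts about $d$ and the completeness/irredundancy split make the argument cleaner than the paper's somewhat terse treatment (which, for instance, glosses over why $(d,a^{k}w)$ does not contribute a genuinely new coset, a point you handle correctly via the $j=0$ case).
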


\begin{proof}
(a) Suppose that $c,g$ exist and let $q$ be the suffix of $d$ with initial vertex $v_0$. 

Let 
$C_k = L\upset{q,gc^k}\,$.  Then if $g$ is non-empty, for $k>l$,
\[ (q,gc^k)\,(q,gc^l)^{-1} = (q,gc^k)\,(gc^l,q) = 0 \not\in L \]
and the cosets $C_k$ and $C_l$ are distinct.  If $g$ is empty, then
\[ (q,c^k)\,(q,c^l)^{-1} = (q,c^k)\,(c^l,q) = (q,c^{k-l}q)  \not\in L \]
and the cosets $C_k\,$ and $C_l\,$ are again distinct.

(b) We are now reduced to the case that the only directed circuits in $\Gamma$ that can be attached to a vertex of $d$ are powers of the loop $a$.
A coset representative of $L=L_{a^m,d}$ has the form $(a^rd,w)$ with $r \geq 0$, or $(q,w)$ where $q$ is a proper
suffix of $d$.  Hence $w$ has the same initial vertex $v$ as $d$ or of some proper suffix of $d$.  We can only construct finitely many 
representatives of the form $(q,w)$.  We do not need to consider paths of the form $(d,a^kw)$ for any $k \geq 0$ since
$(d,a^kw)(w,d) = (d,a^kd) \in L$.  The analysis in the proof of part (b) of Theorem  \ref{idpt_type_grinv} can then be repeated to
show that the number of cosets obtained this way is $\sum_{v \in V(d)} N^{\Gamma \setminus \{ a \}}_{v,d}$.  

We now consider representatives of the form $(a^rd,w)$ with $r \geq 1$.  Here $w$ must have the form
$w=a^st$ for some $s \geq 0$ and some (possibly empty) directed path $t$ not containing the edge $a$. If
$r \equiv s \pmod{m}$ then 
$(a^rd,a^st)(t,d) = (a^rd,a^sd) \in L$
and so $\nobraupset{L(a^rd,a^st)} = \nobraupset{L(d,t)}= \nobraupset{L(s,t_1)}$ for some suffix $s$ of $d$ and path $t_1$ with the 
same initial vertex as $s$ but not sharing the same first edge.  Hence  $\nobraupset{L(a^rd,a^st)}$ will be counted within the 
sum $\sum_{v \in V(d)} N^{\Gamma \setminus \{ a \}}_{v,d}$.  Now fix $t$ and consider  the cosets $\nobraupset{L(a^rd,a^st)}$
with $r \not\equiv s \pmod{m}$.  Now given
$L\upset{a^{r_1}d,a^{s_1}t}$ and $L\upset{a^{r_2}d,a^{s_2}t}$ with $s_1 \geq s_2$, we have
\[ (a^{r_1}d,a^{s_1}t)(a^{r_2}d,a^{s_2}t)^{-1} = (a^{r_1}d,a^{s_1}t)(a^{s_2}t,a^{r_2}d) = (a^{r_1}d,a^{s_1-s_2+r_2}d) \]
and $(a^{r_1}d,a^{s_1-s_2+r_2}d) \in L$ if and only if  $r_2-s_2 \equiv r_1-s_1 \pmod{m}$. Hence for a fixed $t$ we can
produce exactly $m-1$ distinct cosets of the form $L\upset{a^rd,a^st}$.

But for distinct paths $t_1$ and $t_2$, $a^{s_1}t_1$ cannot be suffix comparable with $a^{s_2}t_2$ and so
\[ (a^{r_1}d,a^{s_1}t_1)(a^{r_2}d,a^{s_2}t_2)^{-1} = 0 \ne L \]
and the cosets determined by distinct paths $t_1$ and $t_2$ are distinct.  Hence each of the $N^{\Gamma}_{\dom(a),a}$
paths $t$ starting at $\dom(a)$, but not having $a$ as its initial edge, contributes $m-1$ cosets.
\end{proof}

\begin{example}
\label{bicyclic_with_zero}
As in Remark \ref{bi_poly}, we suppose that $\Gamma$ consists only of the vertex $x$ and a loop $a$ at $x$ so that the graph inverse semigroup $\S(\Gamma)$ is the 
bicyclic monoid $B$ with a zero adjoined.
From Theorem
\ref{idpt_type_grinv}, the closed inverse submonoids of $B$ contained in $E(B)$ have infinite index. Part (b) of Theorem \ref{loops}
tells us that that the closed inverse submonoid $B_m=L_{a^m,x}$ of $B$ has index $m$.
\end{example}

\begin{example}
\label{cycle_type_eg}
Let $\Gamma$ be the following graph:
\[ \xymatrixcolsep{3pc}
\xymatrix{
x' \ar[r]^h & y' \\
x \ar@(ul,dl)[]_a \ar[r]^{e} \ar[u]_g  & y  \ar[r]^{f} \ar[u]_k &  z }
\]
and let $L = L_{a^2,ef}$.  Then we have
\[ N^{\Gamma \setminus a}_{z,ef} = 1 \,,  N^{\Gamma \setminus a}_{y,ef} = 2 \,, N^{\Gamma \setminus a}_{x,ef} = 3 \,,\]
counting the paths in the sets $\{z \}$ , $\{ y,k\}$ and $\{ x,g,gh \}$ respectively, and $N^{\Gamma}_{x,a} = 6$, counting the 
paths in the set $\{x,e,g,ef,ek,gh\}$.  From part (b) of Theorem \ref{loops} we find that $[\S(\Gamma),L]=12$ and a complete set of
coset representatives is 
\[ \left\{(z,z),(f,y),(f,k),(ef,x),(ef,g),(ef,gh), \right.\]
\[ \left. (ef,a),(ef,ag),(ef,agh),(ef,ae),(ef,aek),(ef,aef) \right\} \,.\]
\end{example}

\end{document}